\newtheorem{thm}{Theorem}[section]
\newtheorem{prop}[thm]{Proposition}
\newtheorem{lemma}[thm]{Lemma}
\theoremstyle{definition}
\newtheorem*{defn}{Definition}
\theoremstyle{remark}
\newtheorem{rmk}[thm]{Remark}
\newcommand{\by}{\begin{ytableau}}
\newcommand{\ey}{\end{ytableau}}
\newcommand{\yts}{\ytableaushort}
\renewcommand{\bar}{\overline}
\newcommand{\cal}{\mathcal}
\DeclareMathOperator{\CYT}{CYT}
\begin{document}

\title{A simplified Kronecker rule for one hook shape}
\author{Ricky Ini Liu}
\address{Department of Mathematics, North Carolina State University, Raleigh, NC 27695}
\email{riliu@ncsu.edu}

\begin{abstract}
	Recently Blasiak gave a combinatorial rule for the Kronecker coefficient $g_{\lambda \mu \nu}$ when $\mu$ is a hook shape by defining a set of colored Yamanouchi tableaux with cardinality $g_{\lambda\mu\nu}$ in terms of a process called conversion. We give a characterization of colored Yamanouchi tableaux that does not rely on conversion, which leads to a simpler formulation and proof of the Kronecker rule for one hook shape.
\end{abstract}
\maketitle

\section{Introduction}

Let $V_\lambda$ be the irreducible representation of the symmetric group $S_n$ corresponding to the partition $\lambda$. The \emph{Kronecker coefficient} $g_{\lambda\mu\nu}$ is defined to be the multiplicity of $V_\nu$ in the tensor product $V_\lambda \otimes V_\mu$. Despite their simple definition and fundamental importance, the Kronecker coefficients are not well understood: indeed, a longstanding open problem is to give a positive combinatorial formula for $g_{\lambda\mu\nu}$. For some known special cases, see \cite{Ballantine-Orellana,Blasiak,Blasiak-Mulmuley-Sohoni,Briand-Orellana-Rosas,Lascoux, Remmel, Remmel-Whitehead, Rosas}.

In \cite{Blasiak}, Blasiak gives a combinatorial rule for $g_{\lambda\mu\nu}$ when only one of the partitions is a hook shape, which represents the first nontrivial Kronecker rule when two of the partitions are completely unrestricted. This rule describes the Kronecker coefficients as the cardinalities of certain sets of \emph{colored Yamanouchi tableaux}. The characterization of these tableaux as well as the proof that they are correctly enumerated relies on two processes introduced by Haiman \cite{Haiman} called \emph{mixed insertion} and \emph{conversion}. As a result, it is unfortunately somewhat complicated to describe and apply this Kronecker rule in practice.

In this paper, we give an alternative characterization of the colored Yamanouchi tableaux from \cite{Blasiak} that does not make reference to mixed insertion or conversion. The result is a simplified formulation and proof of the Kronecker rule for one hook shape. While we use much of the same notation as \cite{Blasiak} for reasons of consistency, the proof contained herein is essentially self-contained.

We begin in Section 2 with some preliminaries about colored tableaux and conversion. In Section 3 we discuss reading words of colored tableaux and prove our main lemma (Lemma~\ref{lemma-main}). Finally, in Section 4 we show how Lemma~\ref{lemma-main} implies the Kronecker rule for one hook shape, given in Theorem~\ref{thm-main}.

\section{Preliminaries}
In this section, we review the necessary background on colored tableaux and conversion.

\subsection{Young diagrams}
Given a positive integer $n$, a \emph{partition} $\lambda=(\lambda_1, \lambda_2, \dots) \vdash n$ is a weakly decreasing sequence of nonnegative integers summing to $n$. (We often ignore trailing zeroes.) The \emph{Young diagram} of a partition $\lambda$ is an array of boxes, aligned to the north and west, with $\lambda_i$ boxes in row $i$. We typically refer to a partition and its Young diagram interchangeably. Transposing the Young diagram of $\lambda$ gives the Young diagram of its \emph{conjugate} $\lambda'$. Given two partitions $\lambda$ and $\mu$ with $\mu_i \leq \lambda_i$ for all $i$, the \emph{skew Young diagram} $\lambda/\mu$ is the set of boxes in $\lambda$ with the boxes of $\mu$ removed.

A \emph{ribbon} is a skew Young diagram that does not contain a $2 \times 2$ square. Any ribbon can be divided into \emph{connected components}, where boxes $x$ and $y$ lie in the same component if there is a sequence of boxes $x=x_1, x_2, \dots, x_k=y$ in the ribbon such that $x_i$ and $x_{i+1}$ share an edge.

\subsection{Colored tableaux}
Let $\cal A$ be the alphabet $\{1, 2, \dots, n, \bar 1, \bar 2, \dots, \bar n\}$ endowed with the partial order $1<2<\dots<n$ and $\bar 1<\bar 2 < \dots < \bar n$. Note that there are $\binom{2n}{n}$ total orders of $\cal A$ compatible with this partial order.

For any total order $<$ of $\cal A$, a \emph{(semistandard) colored tableau} $T = T_<$ of shape $\lambda$ is a filling of the boxes of $\lambda$ with letters of $\cal A$ such that: 
\begin{itemize}
	\item each row and column is weakly increasing (with respect to $<$);
	\item no two identical unbarred letters appear in the same column; and
	\item no two identical barred letters appear in the same row.
\end{itemize}
See Figure~\ref{fig-conversion} for some examples of colored tableaux with respect to different orders.

Observe that if a colored tableau contains only barred letters, then transposing the tableau and removing all bars gives a semistandard tableau containing only unbarred letters.

The \emph{unbarred content} of a colored tableau $T_<$ is the sequence $\alpha(T_<) = (\alpha_1, \alpha_2, \dots)$, where $\alpha_i$ is the number of occurrences of $i$ in $T_<$, while the \emph{(total) content} is $(c_1, c_2, \dots)$, where $c_i$ is the total number of occurrences of $i$ and $\bar i$ in $T_<$. The \emph{total color} is the number of barred letters in $T_<$.

If $a$ and $\bar b$ are consecutive letters in $<$ (in either order), then for any tableau $T_<$, the letters $a$ and $\bar b$ must occupy boxes forming a ribbon. Any connected component of a ribbon can be filled by $a$ and $\bar b$ in exactly two ways: these differ only in the northeast corner if $a<\bar b$ or in the southwest corner if $a>\bar b$.

\subsection{Conversion}
Suppose $<$ and $\prec$ are total orders of $\cal A$ that are identical except for the order of $a$ and $\bar b$, say $a<\bar b$ but $\bar b \prec a$. Then there is a natural bijection between colored tableaux with respect to $<$ and colored tableaux with respect to $\prec$ via a process called \emph{conversion}, introduced by Haiman \cite{Haiman} (see also \cite{Benkart-Sottile-Stroomer}).

Let $T_<$ be a colored tableau, so that the boxes containing $a$ or $\bar b$ form a ribbon. For each component of the ribbon, there is a unique way to refill it with the same number of $a$'s and $\bar b$'s in a way that is compatible with $\bar b\prec a$. (If the northeast corner contains $a$, then move each $a$ to the bottom of its column; if the northeast box contains $\bar b$, move each $a$ to the right within its row. See Figure~\ref{fig-ribbon}.) Replacing each component in this manner switches $T_<$ to a colored tableau $T_\prec$.

\begin{figure}
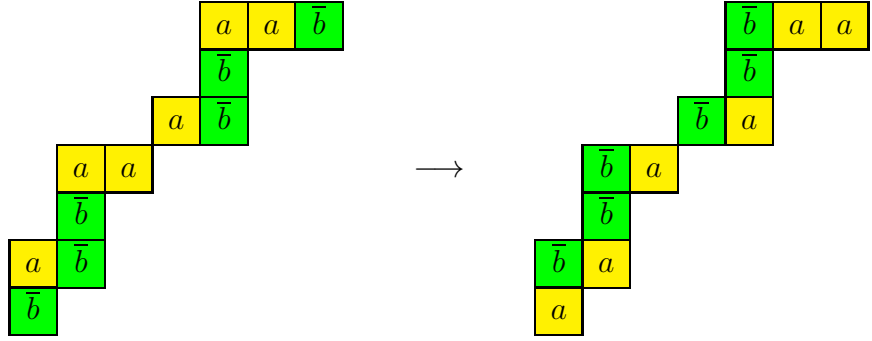
 
	\[
	\yts{\none\none\none\none {*(yellow)a}{*(yellow)a}{*(green)\bar b},
		\none\none\none\none {*(green)\bar b},
		\none\none\none {*(yellow)a} {*(green)\bar b},
		\none {*(yellow)a}{*(yellow)a},
		\none {*(green)\bar b},
		{*(yellow)a}{*(green)\bar b},
		{*(green)\bar b}}\qquad \longrightarrow \qquad
	\yts{\none\none\none\none {*(green)\bar b}{*(yellow)a}{*(yellow)a},
		\none\none\none\none {*(green)\bar b},
		\none\none\none {*(green)\bar b}{*(yellow)a},
		\none {*(green)\bar b}{*(yellow)a},
		\none {*(green)\bar b},
		{*(green)\bar b}{*(yellow)a},
		{*(yellow)a}}
	\]
	\caption{\label{fig-ribbon}Conversion from $a < \bar b$ to $\bar b \prec a$. The ribbon above has two connected components with six boxes each.}
\end{figure}

Given any two total orders $<$ and $\prec$ of $\mathcal A$, one can be obtained from the other by repeatedly switching the order of a consecutive barred letter and unbarred letter. Therefore, one can iterate the switching process above to convert any tableau $T_<$ to a tableau $T_\prec$. See Figure~\ref{fig-conversion} for an example. Importantly, the resulting tableau is well-defined, that is, it does not depend on the sequence of switches used to transform $<$ to $\prec$ (as follows from the Diamond Lemma, or see \cite{Benkart-Sottile-Stroomer}).
\begin{figure}
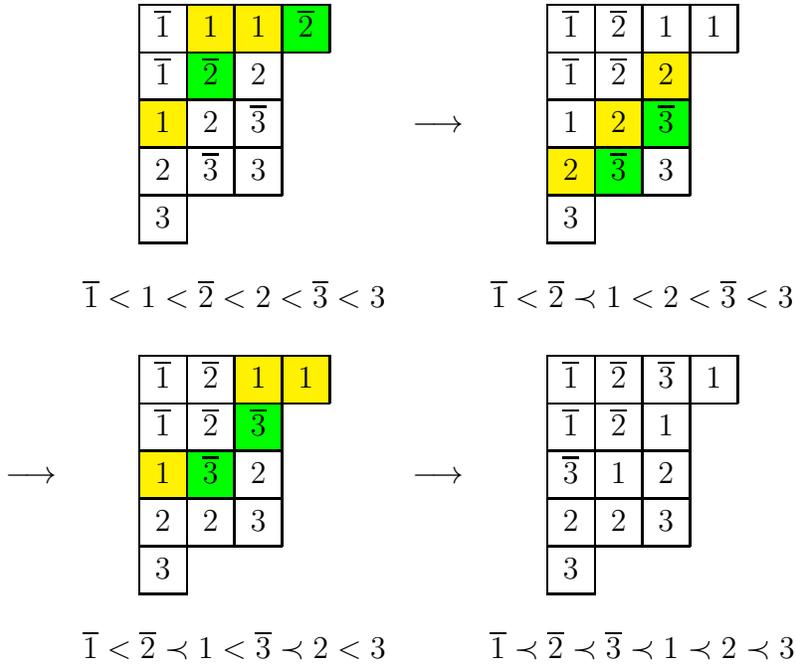
 
	\[\begin{array}{cccc}
	&
	\yts{{\bar 1}{*(yellow)1}{*(yellow)1}{*(green)\bar 2},{\bar1}{*(green)\bar2}2,{*(yellow)1}2{\bar3},2{\bar3}3,3}
	&
	\longrightarrow
	&
	\yts{{\bar 1}{\bar 2}11,{\bar1}{\bar2}{*(yellow)2},1{*(yellow)2}{*(green)\bar3},{*(yellow)2}{*(green)\bar3}3,3}
	\\\\
	&
	\bar 1 < 1 < \bar 2 < 2 < \bar 3 < 3
	&
	&
	\bar 1 < \bar 2 \prec 1 < 2 < \bar 3 < 3
	\\\\
	\longrightarrow&
	\yts{{\bar 1}{\bar 2}{*(yellow)1}{*(yellow)1},{\bar1}{\bar2}{*(green)\bar 3},{*(yellow)1}{*(green)\bar3}2,223,3}
	&
	\longrightarrow
	&
	\yts{{\bar 1}{\bar 2}{\bar 3}1,{\bar1}{\bar2}1,{\bar3}12,223,3}
	\\\\
	&
	\bar 1 < \bar 2 \prec 1 <  \bar 3 \prec 2 < 3
	&&
	\bar 1 \prec  \bar 2 \prec  \bar 3 \prec 1\prec 2\prec 3
	\end{array}\]
	\caption{\label{fig-conversion} Conversion from the natural order $\bar 1 < 1 < \bar 2 < 2 < \bar 3 < 3$ to the small bar order $\bar 1 \prec \bar 2 \prec \bar 3 \prec 1 \prec 2 \prec 3$ in three steps.}
\end{figure}

\section{Reading words}

In this section, we define a particular reading word of a colored tableau and show how it is affected by conversion.

\subsection{Ballot sequences} 
	Given a word $w=w_1 \dotsm w_m$ in the alphabet $\{1, \dots, n\}$, we say that $w$ is a \emph{ballot sequence} if any initial segment of $w$ contains at least as many occurrences of $i$ as of $i+1$ for each letter $i$. (Ballot sequences are also called \emph{lattice words} or \emph{reverse Yamanouchi words}.) For instance, $11232132$ is a ballot sequence, but $11232213$ is not because the initial segment $112322$ contains more $2$'s than $1$'s.

\begin{defn}
	For a sequence $\alpha = (\alpha_1, \dots, \alpha_n)$, we say that $w$ is an $\alpha$-ballot sequence if, in any initial segment of $w$, the number of occurrences of $i+1$ minus the number of occurrences of $i$ is at most $\alpha_{i}-\alpha_{i+1}$. 
\end{defn}
When $\alpha$ is a partition, $w$ is an $\alpha$-ballot sequence if and only if the concatenation \[\underbrace{11\cdots 1}_{\alpha_1} \underbrace{22\cdots 2}_{\alpha_2} \cdots \underbrace{nn \cdots n}_{\alpha_n} w\] is a ballot sequence.

\subsection{Reading words}
We specify a particular reading order for colored tableaux as follows.
\begin{defn}
	Let $T_<$ be a colored tableau (with respect to any total order $<$ of $\cal A$).
	\begin{itemize}
		\item The \emph{unbarred reverse (row) reading word} $u=u(T_<)$ of $T_<$ is the word obtained by reading its unbarred entries along rows from right to left, top to bottom, skipping over any barred entries.
		\item The \emph{barred (column) reading word} $\bar v = \bar v(T_<)$ of $T_<$ is the word obtained by reading the barred entries along columns from bottom to top, left to right, skipping over any unbarred entries. Let $v=v(T_<)$ to be the word obtained from $\bar v$ by removing the bar from each letter.
		\item The \emph{total reverse (row-column) reading word} $w=w(T_<)$ is the concatenation $w=uv$.
	\end{itemize}
\end{defn}

For example, if $T_<$ is the first tableau in Figure~\ref{fig-conversion}, then $u(T_<)=11221323$, $v(T_<)=113232$, and $w(T_<) = 11221323113232$ (which is a ballot sequence).

Two types of total orders will be especially important for determining when the property of $w$ being a ballot sequence is preserved under conversion.

\begin{defn}
	Let $<$ be a total order on $\cal A$.
	\begin{itemize}
		\item The order $<$ is \emph{unbarred-tight} if at most one barred letter occurs between any two consecutive unbarred letters.
		\item The order $<$ is \emph{barred-tight} if at most one unbarred letter occurs between any two consecutive barred letters.
	\end{itemize}
\end{defn}
For instance, the order $1<\bar 1 < \bar 2 < 2 < \bar 3 < 3$ is barred-tight but not unbarred-tight. Most total orders are neither barred-tight nor unbarred-tight, and few are both. However, among the orders that are both barred-tight and unbarred-tight are the \emph{small bar order}
\[\bar 1 \prec \bar 2 \prec \dots \prec \bar n \prec 1 \prec 2 \prec \dots \prec n\]
and the \emph{natural order}
\[\bar 1 < 1 < \bar 2 < 2 < \dots < \bar n < n.\]

We are now ready to prove the main lemma needed to derive the hook Kronecker rule.
\begin{lemma} \label{lemma-main}
	Let $<$ and $\prec$ be total orders of $\cal A$, and let $T_<$ and $T_\prec$ be colored tableaux that correspond under conversion with unbarred content $\alpha = \alpha(T_<) = \alpha(T_\prec)$.
	\begin{enumerate}[(a)]
		\item If $<$ and $\prec$ are both unbarred-tight, then $u(T_<)$ is a ballot sequence if and only if $u(T_\prec)$ is. 
		\item If $<$ and $\prec$ are both barred-tight, then $v(T_<)$ is an $\alpha$-ballot sequence if and only if $v(T_\prec)$ is.
		\item If $<$ and $\prec$ are both unbarred-tight and barred-tight, then $w(T_<)$ is a ballot sequence if and only if $w(T_\prec)$ is.
	\end{enumerate}
\end{lemma}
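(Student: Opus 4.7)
My first move is to observe that (c) follows formally from (a) and (b). Since $w=uv$ and the word $u$ contains all the unbarred letters of $T$ (with $\alpha_i$ copies of $i$), a prefix of $w$ lying in the $u$-part satisfies the usual ballot condition if and only if the corresponding prefix of $u$ does; meanwhile, a prefix of $w$ that extends into $v$ satisfies the ballot condition exactly when, for each $i$, the excess of $(i+1)$'s over $i$'s in the corresponding prefix of $v$ is at most $\alpha_i - \alpha_{i+1}$, which is the $\alpha$-ballot condition on $v$. So (a) and (b) together yield (c). Moreover (b) is the row-column transpose of (a), so the main task is to prove (a).

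For (a), the plan is to induct on the number of adjacent swaps $a \leftrightarrow \bar b$ connecting $<$ to $\prec$. Under a single such swap, only cells in the $a/\bar b$ ribbon $R$ change; within each connected component $R_k$ of $R$, the effect on the reading word is dictated by the color of the NE corner. When the NE corner is $\bar b$, each row of $R_k$ flips from $a^\ell \bar b^m$ to $\bar b^m a^\ell$, and the reverse row reading of such a row (skipping barred entries) produces the same $a$-block either way, so $u$ is literally unchanged. The substantive case is when the NE corner is $a$: then each column of $R_k$ flips and some $a$'s are displaced to strictly later rows. Here the unbarred-tight hypothesis enters: it guarantees that at most one barred letter separates $a$ from the neighboring unbarred values in $<$, which restricts the content of the rows that the $a$'s must traverse and, I claim, forces the change in $u$ to decompose into a sequence of elementary adjacent transpositions of $a$ with an unbarred letter of nearby value, each of which preserves ballotness. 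Part (b) proceeds dually, with barred-tightness now controlling the column-read word $v$ and the $\alpha$-ballot property.

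The main obstacle is the chaining: a single adjacent swap may destroy tightness even when both $<$ and $\prec$ satisfy it, so a straightforward induction that keeps all intermediate orders tight will not go through. I would handle this either by carefully choosing the order of swaps along the path from $<$ to $\prec$ so that the single-swap analysis remains applicable at each step, or by strengthening the single-swap step to an invariant on the reading words themselves that needs no tightness hypothesis of the ambient order and can therefore be iterated freely along the conversion path to yield the conclusion for tight endpoints.
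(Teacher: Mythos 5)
Your high-level architecture matches the paper's: (c) reduces to (a) and (b) exactly as you say, (b) is proved like (a), and (a) is handled one adjacent swap $a \leftrightarrow \bar b$ at a time, with the only substantive case being the ribbon components whose northeast corner contains $a$, where some $a$'s sink within their columns. You also correctly flag the chaining obstacle, and the paper resolves it by the first of your two options: always swap the \emph{smallest} consecutive pair of the form $a < \bar b$. Unbarred-tightness plus this minimality force the order to look locally like $\cdots < a-1 < a < \bar b < a+1 < \cdots$, so the swapped order is again unbarred-tight and iteration reaches the small bar order; moreover the fact that $a-1$ and $a+1$ are the \emph{immediate} neighbors of the pair is exactly what the swap analysis needs, so this choice is not mere bookkeeping. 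You should carry this out rather than leave it as one of two possible fixes.

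The genuine gap is the core of the single-swap step. You assert that the displacement of the $a$'s ``decomposes into a sequence of elementary adjacent transpositions of $a$ with an unbarred letter of nearby value, each of which preserves ballotness,'' but this is precisely the content that needs proof, and the proposed mechanism is not sound: when an $a$ sinks in its column it can pass an occurrence of $a+1$ in the reading order (e.g.\ an $a$ at $(1,2)$ dropping to $(2,2)$ passes an $a+1$ sitting at $(2,3)$), and transposing $a$ past $a+1$ does \emph{not} preserve ballotness in isolation. The count of $(a+1)$'s preceding the $i$th $a$ genuinely increases under the swap, and ballotness survives only because of a global estimate. The paper's argument is: after the swap $a_i$ is the rightmost $a$-or-$\bar b$ in its row, so the $(a+1)$'s preceding it in $u(T_\prec)$ are exactly those in columns strictly right of its column $c_i$; the northeast corner of its ribbon component carries some $a_j$ with $j\le i$ in column $c_j = c_i + i - j$; ballotness of $u(T_<)$ applied at $a_j$ bounds the $(a+1)$'s right of column $c_j$ by $j-1$; and since a column holds at most one $a+1$, columns $c_i+1,\dots,c_j$ contribute at most $i-j$ more, for a total of at most $i-1$. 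None of this counting is present in your plan, so as written the proposal does not yet constitute a proof of (a) (nor, dually, of (b)).
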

\begin{proof}
	Part (c) follows immediately from parts (a) and (b): by definition, $w$ is a ballot sequence if and only if $u$ is a ballot sequence and $v$ is an $\alpha$-ballot sequence.	We prove part (a) below; part (b) is similar.
	
	Let $<$ be any unbarred-tight order, and find the smallest pair of consecutive letters of the form $a < \bar b$. If $a\neq 1$, then the letter preceding $a$ must be $a-1$, and if $a \neq n$, the letter succeeding $\bar b$ must be $a+1$. Then define $\prec$ to be the unbarred-tight order given by the switch
	\[ \cdots < a-1 < a < \bar b < a+1 < \cdots  \quad \to \quad \cdots \prec a-1 \prec \bar b \prec a \prec a+1 \prec \cdots.\]
	Switching from $T_<$ to $T_\prec$ only shifts the occurrences of $a$ within $u(T_<)$ and $u(T_\prec)$.
	
	Denote the $i$th occurrence of $a$ in $u(T_<)$ or $u(T_\prec)$ by $a_i$ (and similarly for other letters). Note that for either $T_<$ or $T_\prec$, if $a_i$ is the rightmost occurrence of $a$ or $\bar b$ in its row, then the occurrences of $a+1$ that appear before $a_i$ in $u$ are exactly those that lie in a column to the right of $a_i$. (Since $a+1$ immediately follows $a$ and $\bar b$ in $<$ and $\prec$, any label directly east of $a_i$ must be at least $a+1$, so no occurrence of $a+1$ can be strictly southeast of $a_i$.) Similarly, if $a_i$ is the leftmost occurrence of $a$ or $\bar b$ in its row, then the occurrences of $a-1$ that appear before $a_i$ in $u$ are exactly those that lie in a column weakly to the right of $a_i$.
	
	Suppose $u(T_<)$ is a ballot sequence. When switching from $T_<$ to $T_\prec$, each $a$ either moves to the right within its row past some $\bar b$'s, which leaves it in the same position in $u(T_\prec)$ as in $u(T_<)$, or it moves down within its column, which shifts it later in $u(T_\prec)$ than in $u(T_<)$. Hence we need only verify that $a_i$ occurs before $(a+1)_i$ in $u(T_\prec)$ when $a_i$ gets shifted down within its column $c_i$. In this case, $a_i$ is the rightmost $a$ or $\bar b$ in its row in $T_\prec$, so we need at most $i-1$ occurrences of $a+1$ to the right of column $c_i$ in $T_\prec$ or, equivalently, in $T_<$.
	
	In the ribbon containing $a$ and $\bar b$ in $T_<$, consider the connected component containing $a_i$. Since $a_i$ gets shifted down, the northeast corner of this component must contain some $a_j$, $j \leq i$, lying in column $c_j = c_i+i-j$. Since $a_j$ is the rightmost $a$ or $\bar b$ in its row and $u(T_<)$ is a ballot sequence, there are at most $j-1$ occurrences of $a+1$ to the right of column $c_j$. But since each column can contain at most one occurrence of $a+1$, there can be at most $j-1+(c_j-c_i) = i-1$ occurrences of $a+1$ to the right of column $c_i$, as desired.
	
	Conversely, if $u(T_\prec)$ is a ballot sequence, then we need only verify that $a_i$ occurs after $(a-1)_i$ in $T_<$ when $a_i$ is shifted within its column. This follows from a similar argument to above.
	
	Therefore $u(T_<)$ is a ballot sequence if and only if $u(T_\prec)$ is whenever $<$ and $\prec$ are related by the single switch above. But repeatedly applying such switches will transform any unbarred-tight order to the small bar order. It follows that if $u(T_<)$ is a ballot sequence for any unbarred-tight order, then $u(T_\prec)$ must be for any unbarred-tight order $\prec$, as desired.
\end{proof}

\begin{rmk}
	The proof of Lemma~\ref{lemma-main} in fact shows slightly more: if $u(T_<)$ is a ballot sequence for some unbarred-tight order $<$, then $u(T_\prec)$ is a ballot sequence for any total order $\prec$. A similar result holds for $v$ if $<$ is barred-tight and for $w$ if $<$ is both barred-tight and unbarred-tight. However, it is not true that the ballot sequence condition is always preserved by conversion: for instance, the two tableaux $T_<$ and $T_\prec$ below are related by conversion, but $w(T_\prec) = 1212$ is a ballot sequence while $w(T_<) = 2112$ is not.
	\[
	\begin{array}{ccc}
	\yts{1{\bar 2},{\bar1}2}& \longleftrightarrow &  \yts{12,{\bar1}{\bar2}}\\\\
	1 \prec \bar 1 \prec \bar 2 \prec 2 && 1 < \bar 1 < 2 < \bar 2
	\end{array}
	\]
\end{rmk}

\begin{rmk}
	Although we present Lemma~\ref{lemma-main} using a specific reading order for simplicity, the same result holds for many other reading orders. In particular, let $u'(T_<)$ be any reading word in which a box containing $a$ is read after any box containing $a$, $a-1$, or $a+1$ that lies weakly to the northeast or immediately to the northwest. It is easy to show that if $<$ is an unbarred-tight order, then $u(T_<)$ is a ballot sequence if and only if $u'(T_<)$ is.
	
	Similarly, for any barred-tight order, $v$ can be replaced with any reading order in which $\bar b$ is read after any box containing $\bar b$, $\bar{b-1}$, or $\bar{b+1}$ that lies weakly to the southwest or immediately to the northwest.
\end{rmk}

\begin{rmk}
	Conversion does not in general preserve the Knuth equivalence class of the reverse of $w$ even when $<$ and $\prec$ are the natural order and the small bar order: take
	\[T_< = \yts{{\bar 1}1,2{\bar3},3}\,, \qquad T_\prec = \yts{{\bar 1}1,{\bar 3}2, 3}\,.\]
	
\end{rmk}

\section{Kronecker coefficients} We are now ready to prove a combinatorial rule for Kronecker coefficients with one hook shape.  For $0 \leq d \leq n-1$, we will denote by $\mu(d)$ the partition $(n-d, \underbrace{1, \dots, 1}_d) = (n-d, 1^d)$. 

First we recall the following well-known result relating colored tableaux to the sum of two Kronecker coefficients. This follows from the classical theory of Littlewood-Richardson coefficients; we sketch the proof below (see also Proposition 3.1 of \cite{Blasiak}).
\begin{prop} \label{prop-sum}
	Let $\lambda$ and $\nu$ be partitions of $n$, and let $\prec$ be the small bar order on $\mathcal A$ given by
	\[\bar 1 \prec \bar 2 \prec \dots \prec \bar n \prec 1 \prec 2 \prec \dots \prec n.\] Then the number of colored tableaux $T_\prec$ of content $\lambda$, total color $d$, and shape $\nu$ such that the total reverse reading word $w(T_\prec)$ is a ballot sequence is
	$g_{\lambda \mu(d) \nu} + g_{\lambda \mu(d-1) \nu}$.
\end{prop}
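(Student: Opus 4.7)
The plan is to reduce both sides of the equation to the common sum
\[\sum_{\alpha \vdash n-d,\,\beta \vdash d} c^\nu_{\alpha\beta}\, c^\lambda_{\alpha\beta'},\]
where $c^\cdot_{\cdot\cdot}$ denotes the Littlewood--Richardson coefficients. For the representation-theoretic side, I would begin with Pieri's rule $s_{(n-d)} \cdot s_{(1^d)} = s_{\mu(d)} + s_{\mu(d-1)}$: the only two ways to add a vertical $d$-strip to the single row $(n-d)$ differ by whether the strip includes a box in row $1$. On characters this reads $\chi^{\mu(d)} + \chi^{\mu(d-1)} = \operatorname{Ind}_{S_{n-d} \times S_d}^{S_n}(\mathbf{1} \otimes \sgn)$. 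Applying the projection formula to $\chi^\lambda \otimes (\chi^{\mu(d)}+\chi^{\mu(d-1)})$, inserting the branching rule $\operatorname{Res}^{S_n}_{S_{n-d}\times S_d}\chi^\eta = \sum c^\eta_{\alpha\beta}\,\chi^\alpha \otimes \chi^\beta$ for $\eta=\lambda$, the identity $\chi^\beta \otimes \sgn = \chi^{\beta'}$, and then Frobenius reciprocity against $\chi^\nu$ (which introduces a second LR factor from $\operatorname{Res}\chi^\nu$) recovers the displayed sum.

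For the tableau side, let $T_\prec$ be a colored tableau of shape $\nu$, content $\lambda$, and total color $d$ in the small bar order. Because all barred letters precede all unbarred letters in $\prec$, the cells containing barred letters form a partition sub-shape $\beta \subseteq \nu$ with $|\beta|=d$; writing $\alpha$ for the unbarred content, the unbarred entries form a SSYT $U$ of skew shape $\nu/\beta$ and content $\alpha$, while the barred entries (with bars stripped and the barred sub-tableau transposed) form a SSYT $S$ of shape $\beta'$ and content $\lambda-\alpha$ --- the rule ``rows strict, columns weak'' for bars transposes precisely to the usual SSYT condition. Under this decomposition, $u(T_\prec)$ is the reverse row reading of $U$ and $v(T_\prec)$ is the reverse row reading of $S$, since reading bars column-by-column from bottom to top and left to right transposes to reading rows right-to-left and top-to-bottom. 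Because $u$ has content $\alpha$, the concatenation $w = uv$ is a ballot sequence if and only if $u$ is a ballot sequence and $v$ is an $\alpha$-ballot sequence.

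I would then apply the Littlewood--Richardson rule twice. The ballot condition on $u$ is the classical LR condition for a skew SSYT of shape $\nu/\beta$ and content $\alpha$, contributing $c^\nu_{\beta\alpha}=c^\nu_{\alpha\beta}$ admissible $U$'s for each $(\alpha,\beta)$. The $\alpha$-ballot condition on $v$ --- namely that a SSYT $S$ of shape $\beta'$ and content $\lambda-\alpha$ has $\alpha$-ballot reverse row reading --- is counted by $c^\lambda_{\alpha\beta'}$ via the equivalent ``Yamanouchi-with-$\alpha$-prefix'' formulation of the LR rule, derivable from the classical rule by standardization or by an explicit bijection with LR tableaux of shape $\lambda/\alpha$ and content $\beta'$. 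Summing over $\alpha \vdash n-d$ and $\beta \vdash d$ then matches the two expressions. I expect the main obstacle to be this last reformulation of $c^\lambda_{\alpha\beta'}$: while classical, it lies slightly outside the bare statement of the LR rule and is the one combinatorial input requiring care beyond the Pieri/Frobenius machinery and the barred/unbarred decomposition.
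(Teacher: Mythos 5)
Your proposal is correct and follows essentially the same route as the paper: Pieri's rule to combine the two hooks into $s_{(n-d)}\cdot s_{(1^d)}$, a restriction/coproduct argument (the paper phrases it as $(fg)*s_\nu=\sum_\beta (f*s_{\nu/\beta})(g*s_\beta)$ rather than via the projection formula) to reach $\sum_\beta \langle s_\lambda, s_{\nu/\beta}s_{\beta'}\rangle$, and the same barred/unbarred decomposition of a small-bar-order tableau into a pair $(U,S)$ of shapes $\nu/\beta$ and $\beta'$. The only divergence is that you expand one step further into $\sum_{\alpha,\beta}c^\nu_{\alpha\beta}c^\lambda_{\alpha\beta'}$ and so need the (standard but correctly flagged) companion-tableau form of $c^\lambda_{\alpha\beta'}$, whereas the paper sidesteps this by counting ballot tableaux of the disjoint-union shape $\nu/\beta\oplus\beta'$ and content $\lambda$ in one stroke.
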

\begin{proof}
	 Let $\langle \cdot, \cdot \rangle$ be the Hall inner product on the ring of symmetric functions, so that, for instance, $\langle s_\lambda, s_\mu s_\nu \rangle$ is the Littlewood-Richardson coefficient $c^\lambda_{\mu\nu}$. Let $*$ denote the internal product on symmetric functions defined by $\langle s_\lambda, s_\mu * s_\nu \rangle = g_{\lambda\mu\nu}$.
	 
	 Then
	 \begin{align*}
	 g_{\lambda\mu(d)\nu} + g_{\lambda\mu(d-1)\nu} &= \langle s_\lambda,  s_{\mu(d)}*s_\nu \rangle + \langle s_\lambda, s_{\mu(d-1)}*s_\nu\rangle\\
	 &= \langle s_\lambda, (s_{\mu(d)}+s_{\mu(d-1)})* s_\nu \rangle\\
	 &= \langle s_\lambda, (s_{(n-d)}\cdot s_{(1^d)})* s_\nu \rangle\\
	 &= \sum_{\beta \vdash d} \langle s_\lambda, (s_{(n-d)}*s_{\nu/\beta})(s_{(1^d)}*s_{\beta})\rangle\\
	 &= \sum_{\beta \vdash d} \langle s_\lambda, s_{\nu/\beta} s_{\beta'}\rangle.
	 \end{align*}
	 Let $\nu/\beta \oplus \beta'$ be the skew Young diagram consisting of the disjoint union of $\nu/\beta$ and $\beta'$, translated to lie in distinct rows and columns (with $\beta'$ to the southwest). By the Littlewood-Richardson rule, $\langle s_\lambda, s_{\nu/\beta} s_{\beta'} \rangle$ is the number of semistandard tableaux $T$ of shape $\nu/\beta \oplus \beta'$ and content $\lambda$ whose reverse (row) reading word is a ballot sequence. By transposing the part of $T$ that lies in $\beta'$, barring each letter within, and combining with the part of $T$ lying in $\nu/\beta$, we arrive at a colored tableau $T_\prec$ of content $\lambda$, total color $d$ (whose barred letters form the shape $\beta$), and shape $\nu$ such that $w(T_\prec)$ is a ballot sequence. Summing over all $\beta$ gives the desired result.
\end{proof}

We are now ready to prove a reformulated version of Hook Kronecker Rule I from \cite{Blasiak}.
\begin{thm} \label{thm-main}
	Let $\lambda$ and $\nu$ be partitions of $n$, and let $<$ be the natural order on $\cal A$ given by
	\[\bar 1 <1< \bar 2 < 2 < \dots < \bar n < n.\]
	
	Then the Kronecker coefficient $g_{\lambda \mu(d) \nu}$ is the number of colored tableaux $T_<$ of content $\lambda$, total color $d$, and shape $\nu$ such that the total reverse (row-column) reading word $w(T_<)$ is a ballot sequence and the southwest corner of $T_<$ is unbarred.
\end{thm}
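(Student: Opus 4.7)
The plan is to count all eligible tableaux in the natural order and then isolate those with an unbarred southwest corner. Since both the natural order $<$ and the small bar order $\prec$ are unbarred-tight and barred-tight, conversion between them gives a shape- and content-preserving bijection that, by Lemma~\ref{lemma-main}(c), preserves the property that $w$ is a ballot sequence. Combining this with Proposition~\ref{prop-sum}, the total number $N(d)$ of colored tableaux $T_<$ of shape $\nu$, content $\lambda$, and total color $d$ with $w(T_<)$ a ballot sequence equals $g_{\lambda\mu(d)\nu}+g_{\lambda\mu(d-1)\nu}$ (using the convention $g_{\lambda\mu(-1)\nu}=0$).

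Next I will split $N(d)=N_u(d)+N_b(d)$ according to whether the southwest corner is unbarred or barred, and build a bijection $\phi$ from the color-$d$ barred-southwest-corner tableaux to the color-$(d-1)$ unbarred-southwest-corner tableaux. The map $\phi$ is simply: replace the southwest entry $\bar k$ by $k$. This preserves shape and total content and decreases total color by one. It produces a valid colored tableau because (i) the entry immediately east of $\bar k$ in the bottom row is strictly greater than $\bar k$ (since barred letters do not repeat in a row), hence at least $k$ in the natural order, and (ii) every entry above $\bar k$ in the leftmost column is $\leq \bar k < k$. The inverse map, which bars an unbarred southwest entry $k$, is valid by a symmetric argument using that unbarred letters cannot repeat in a column.

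The crucial observation is that $\phi$ preserves $w$ letter-for-letter. When the southwest corner is $\bar k$, it is the very first entry read in $\bar v(T_<)$ (bottom of the leftmost column) and contributes nothing to $u(T_<)$; after applying $\phi$ it becomes the very last entry of $u$ (leftmost entry of the bottom row, read right-to-left) and contributes nothing to $\bar v$. Thus the letter $k$ merely shifts across the boundary between $u$ and $v$ in $w=uv$, so $w(\phi(T_<))=w(T_<)$ and the ballot condition is preserved. This gives $N_b(d)=N_u(d-1)$, and the theorem then follows by induction on $d$: the base $d=0$ has $N_u(0)=N(0)=g_{\lambda\mu(0)\nu}$ automatically (no barred letters), and the inductive step reads $N_u(d)=N(d)-N_b(d)=g_{\lambda\mu(d)\nu}+g_{\lambda\mu(d-1)\nu}-N_u(d-1)=g_{\lambda\mu(d)\nu}$. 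The main obstacle is verifying the $w$-preservation of $\phi$; this amounts to pinning down where the southwest corner lands in each reading order, after which the rest of the argument is routine bookkeeping.
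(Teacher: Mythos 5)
Your proposal is correct and follows essentially the same route as the paper: conversion between the natural and small bar orders plus Lemma~\ref{lemma-main}(c) and Proposition~\ref{prop-sum} to count all ballot tableaux, then the southwest-corner bar/unbar toggle (which preserves $w$ by moving a letter between the end of $u$ and the start of $v$) to get $N_b(d)=N_u(d-1)$, finishing by induction on $d$. Your verification that the toggle yields a valid colored tableau is a correct elaboration of the paper's one-line justification.
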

\begin{proof}
	By Lemma~\ref{lemma-main}(c) and Proposition~\ref{prop-sum}, the number of colored tableaux $T_<$ of content $\lambda$, total color $d$, and shape $\nu$ such that $w(T_<)$ is a ballot sequence is $g_{\lambda\mu(d)\nu}+g_{\lambda\mu(d-1)\nu}$. Denote this set by $\CYT_{\lambda, d}(\nu)$, and write $\CYT^+_{\lambda, d}(\nu)$ (resp. $\CYT^-_{\lambda, d}(\nu)$) for the subset of these tableaux with barred (resp. unbarred) southwest corner. 
	
	Since $i$ immediately follows $\bar i$ in $<$ for all unbarred letters $i \in \cal A$, the southwest corner of $T_<$ can always be toggled from barred to unbarred or vice versa and remain a colored tableau. Moreover, this toggle does not change $w(T_<)$; it simply moves the last letter of $u(T_<)$ to be the first letter of $v(T_<)$ or vice versa. Hence $|\CYT^+_{\lambda, d}(\nu)| = |\CYT^-_{\lambda, d-1}(\nu)|$.
	
	The result now follows easily, say, by induction on $d$: if $|\CYT^-_{\lambda, d-1}(\nu)| = g_{\lambda\mu(d-1)\nu}$, then \[g_{\lambda\mu(d)\nu} = |\CYT_{\lambda,d}(\nu)|-|\CYT^-_{\lambda,d-1}(\nu)| =  |\CYT_{\lambda,d}(\nu)|-|\CYT^+_{\lambda,d}(\nu)|
	= |\CYT_{\lambda,d}^-(\nu)|.\qedhere\] 
\end{proof}

\section{Acknowledgments}
The author would like to thank Jonah Blasiak for useful discussions.

\bibliography{hook}
\bibliographystyle{plain}

\end{document}